\theoremstyle{definition}
\newtheorem{thm}{Theorem}[section]
\newtheorem{prop}[thm]{Proposition}
\newtheorem{lem}[thm]{Lemma}
\newtheorem{cor}[thm]{Corollary}
\newtheorem{defn}[thm]{Definition}
\newtheorem{exmp}[thm]{Example}
\theoremstyle{remark}
\newtheorem{rem}[thm]{Remark}
\newtheorem{rems}[thm]{Remarks}
\newcommand{\inverse}{^{-1}}
\newcommand{\lieg}{\mathfrak{g}}
\newcommand{\lieh}{\mathfrak{h}}
\newcommand{\DD}{\mathcal{D}}
\newcommand{\NN}{\mathbb{N}}
\newcommand{\ZZ}{\mathbb{Z}}
\newcommand{\RR}{\mathbb{R}}
\newcommand{\Aff}{\mathbb{A}}
\newcommand{\norm}[1]{\|#1\|}
\newcommand{\normdot}{\|\cdot\|}
\newcommand{\limitx}{\lim_{t\to 0} \lambda(t)\cdot x}
\newcommand{\limitg}{\lim_{t\to 0} \lambda(t)g\lambda(t)\inverse}
\newcommand{\Char}{\mathrm{char}}
\newcommand{\Lie}{\mathrm{Lie}}
\newcommand{\Ad}{\mathrm{Ad}}
\newcommand{\im}{\mathrm{Im}}
\newcommand{\GL}{\mathrm{GL}}
\title[Optimal Subgroups]
{Optimal Subgroups and Applications to Nilpotent Elements}
\author{Michael Bate}
\address{Christ Church College \\ Oxford University \\ Oxford, OX1 1DP, UK}
\email{bate@maths.ox.ac.uk}
\begin{document}

\renewcommand\thesubsection{\arabic{subsection}}

\begin{abstract}
Let $G$ be a reductive group acting on an affine variety $X$,
let $x\in X$ be a point whose $G$-orbit is not closed,
and let $S$ be a $G$-stable closed subvariety of $X$ which meets
the closure of the $G$-orbit of $x$ but does not contain $x$.
In this paper, we study G.R. Kempf's optimal class $\Omega_G(x,S)$
of cocharacters of $G$ attached to the point $x$;
in particular, we consider how this optimality transfers
to subgroups of $G$.

Suppose $K$ is a $G$-completely reducible subgroup of $G$ which
fixes $x$, and let $H = C_G(K)^0$.
Our main result says that the $H$-orbit of $x$ is also not closed, and
the optimal class $\Omega_H(x,S)$ for $H$ simply consists of the
cocharacters in $\Omega_G(x,S)$ which evaluate in $H$.
We apply this result in the case that $G$ acts on its Lie algebra
via the adjoint representation to obtain some new information about cocharacters
associated with nilpotent elements in good characteristic.
\end{abstract}

\maketitle

\section{Introduction}\label{sec:intro}
Suppose $G$ is a connected reductive linear algebraic group acting on an affine
variety $X$, and $x \in X$ is a point whose $G$-orbit is not closed in $X$.
Let $G\cdot x$ denote the $G$-orbit of $x$, and $\overline{G\cdot x}$ denote its closure in $X$.
A well known result in Geometric Invariant Theory, the Hilbert--Mumford Theorem
\cite[Thm.\ 1.4]{kempf},
states that there exists a cocharacter (one-parameter subgroup) $\lambda$ of $G$
which takes us from $G\cdot x$ to a point in $\overline{G\cdot x}\setminus G\cdot x$.
In \cite{kempf}, G.R. Kempf strengthened this result to provide a so-called \emph{optimal class}
of such cocharacters which enjoys a number of useful properties;
roughly speaking, these cocharacters are the ones which transport us as quickly as
possible outside the orbit of $x$.
The purpose of this paper is to study how these optimal classes behave under passing to
subgroups of $G$; for a subgroup $H$ of $G$ and $x \in X$,
we give some conditions under which the optimal class of cocharacters for $x$ in $H$ is the
set of optimal cocharacters for $x$ in $G$ which evaluate in $H$
(this happens when at least one of the cocharacters
evaluates in $H$, Proposition \ref{prop:gotosubgroup})
and provide a class of subgroups $H$ for which these conditions
are satisfied (Theorem \ref{thm:Gvscent}).

The initial motivation for studying this problem came from the theory of
\emph{associated cocharacters} for nilpotent elements in the Lie algebra
$\lieg$ of $G$.
This theory has been developed by, among others, Jantzen \cite{jantzen}, Premet \cite{premet},
and McNinch \cite{mcninch}, as
a way of replacing the $\mathfrak{sl}_2$-triples which help classify
nilpotent orbits in characteristic zero.
If we have a subgroup $H$ of $G$ and a nilpotent element $e \in \lieh = \Lie(H)$,
a particular problem in this area is to identify whether or not
the set of cocharacters of $H$ associated with $e$ is just the set of cocharacters of
$G$ associated with $e$ which evaluate in $H$, see \cite[$\S$5.12]{jantzen}, \cite{FR}.
In good characteristic, it has been shown that an associated cocharacter is one
of Kempf's optimal cocharacters, but not vice versa.
However, using our general results on optimality,
we show that understanding how the full class of optimal cocharacters behave
with respect to subgroups is enough to also tackle the problem for associated cocharacters
(Theorem \ref{thm:equality}), and provide a wide class of
subgroups which satisfy the relevant properties.
This gives a uniform approach to many results already in the literature, and
also allows some new constructions of interest.

\medskip

We now indicate the layout of the paper.
In Section \ref{sec:prelims}, we begin with some general notation
and definitions for algebraic groups.
In particular, we recall Serre's notion of $G$-complete reducibility
for subgroups of a reductive group, which plays a part in the ensuing discussion.

In Section \ref{sec:kempf}, we recall the results we need from
Kempf's paper \cite{kempf}.
In doing this, we mostly try to use his notation and terminology,
which hopefully makes cross-referencing easier for the interested reader.
In particular, in introducing his results,
we work in a scheme-theoretic context, as this is the way in which Kempf treats
the topic.

In Section \ref{sec:general}, we indicate how Kempf's results can be used
to transfer optimality to subgroups (Proposition \ref{prop:gotosubgroup}),
and how this ties in with $G$-complete reducibility (Theorem \ref{thm:Gvscent}).
Again, we work in quite a general way in this section, mirroring the set-up that Kempf
has in his paper.

In Section \ref{sec:nilpotent}, we apply our general results to the special case of
the adjoint action of $G$ on its Lie algebra.
Here we show that understanding the transfer of optimality to subgroups is enough to understand
how associated cocharacters behave (Theorem \ref{thm:equality}), and
apply this result to give a wide class of subgroups which behave well in this regard
(Corollary \ref{cor:centcase}).

In the final section, we indicate a few ways in which our results can be extended by
considering non-connected groups \'a la \cite{BMR}, \cite{BMR2};
this allows outer automorphisms of reductive groups to come into play.

\section{Preliminaries}\label{sec:prelims}

\subsection{Basic Notation}
Our basic reference for linear algebraic groups is \cite{borel}.
Except for briefly in Section \ref{sec:nonconnected},
$G$ is a connected reductive linear algebraic group defined over
an algebraically closed field $k$ of characteristic $p \geq 0$.
For a subgroup $H$ of $G$, we let $H^0$ denote the identity component of $H$,
$\DD H$ the derived subgroup
of $H$ and $R_u(H)$ the unipotent radical of $H$.
The centralizer of $H$ in $G$ is denoted $C_G(H)$ and $N_G(H)$ is the normalizer of $H$ in $G$.
The Lie algebra of $G$ (resp. $H$) is denoted $\lieg$ (resp. $\lieh$).

A \emph{$G$-scheme} $X$ is a separated $k$-scheme of finite type on which
$G$ acts morphically;
a key example in this paper is the action of $G$ on $\lieg$ via the
adjoint representation $\Ad: G \to \GL(\lieg)$.
A subscheme $S$ of $X$ is called a \emph{$G$-subscheme} if $S$  is a $G$-scheme
and the immersion $S \subseteq X$ is $G$-equivariant.
In this paper, we are interested only in $k$-points of such $k$-schemes,
and we use the notation $x \in X$ as shorthand for ``$x$ is a $k$-point of $X$''.
For $x \in X$, $C_G(x)$ denotes the stabilizer of $x$ in $G$,
$G\cdot x$ denotes the $G$-orbit of $x$ in $X$, and $\overline{G\cdot x}$ denotes the
closure of this orbit.
For a subgroup $H$ of $G$, we let $X^H$ denote the fixed points of $H$ in $X$.

Let $\Psi = \Psi(G,T)$ denote the set of roots of $G$
with respect to a maximal torus $T$.
Fix a Borel subgroup $B$ of $G$ containing $T$ and let
$\Sigma = \Sigma(G, T)$
be the set of simple roots of $\Psi$ defined by $B$. Then
$\Psi^+ = \Psi(B)$ is the set of positive roots of $G$.
For $\beta \in \Psi^+$ write
$\beta = \sum_{\alpha \in \Sigma} c_{\alpha\beta} \alpha$
with $c_{\alpha\beta} \in \mathbb N_0$.
A prime $p$ is said to be \emph{good} for $G$
if it does not divide $c_{\alpha\beta}$ for any $\alpha$
and $\beta$, and \emph{bad} otherwise.
A prime $p$ is good for $G$ if and only if it is good for
every simple factor of $G$, \cite{SS};
the bad primes for the simple groups are $2$ for all groups except type $A_n$,
$3$ for the exceptional groups and $5$ for type $E_8$.
We say $p = \Char k$ is good for $G$ if $p = 0$ or $p$ is a good prime for $G$.

A linear algebraic group $\Gamma$ is called \emph{linearly reductive} if
all rational representations of $\Gamma$ are semisimple;
a torus is linearly reductive in any characteristic.

\subsection{Levi Decompositions}
A linear algebraic group $H$ has a \emph{Levi decomposition}
if there exists a closed subgroup $L$ of $H$ such that $H = L \ltimes R_u(H)$.
The subgroup $L$ is called a \emph{Levi subgroup} of $H$.
For example, parabolic subgroups of connected reductive groups always
have Levi decompositions.
The following result is an extension of a result of Richardson \cite[Prop.\ 6.1]{rich0},
and can be found in \cite[Prop.\ 2.3]{FR}.

\begin{prop}\label{prop:levistable}
Let $H$ be a linear algebraic group with a Levi decomposition such
that $R_u(H)$ acts simply transitively on the set of Levi subgroups of $H$.
Suppose $\Gamma$ is a linearly reductive algebraic group acting on $H$ by automorphisms.
Then $H$ has a $\Gamma$-stable Levi subgroup.
\end{prop}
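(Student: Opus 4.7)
The plan is to identify the set of Levi subgroups of $H$ with a $\Gamma$-equivariant $R_u(H)$-torsor and then produce a $\Gamma$-fixed point via a cocycle/coboundary argument, with linear reductivity of $\Gamma$ killing the obstruction.

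First I would fix an arbitrary Levi subgroup $L_0$ of $H$. Simple transitivity of the $R_u(H)$-action on the set of Levi subgroups gives, for each $\gamma \in \Gamma$, a \emph{unique} element $u_\gamma \in R_u(H)$ with $\gamma(L_0) = u_\gamma L_0 u_\gamma\inverse$. Setting $\sigma(\gamma) := u_\gamma$, the computation
\[
\sigma(\gamma\delta)L_0\sigma(\gamma\delta)\inverse = (\gamma\delta)(L_0) = \gamma\bigl(\sigma(\delta)L_0\sigma(\delta)\inverse\bigr) = \gamma(\sigma(\delta))\sigma(\gamma)L_0\sigma(\gamma)\inverse\gamma(\sigma(\delta))\inverse
\]
together with uniqueness forces the $1$-cocycle identity $\sigma(\gamma\delta) = \gamma(\sigma(\delta))\,\sigma(\gamma)$, where $\Gamma$ acts on $R_u(H)$ by restriction of its action on $H$. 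Thus $\sigma$ encodes the $\Gamma$-action on the $R_u(H)$-torsor of Levi subgroups.

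Next I would observe, again by simple transitivity, that a conjugate $vL_0v\inverse$ is $\Gamma$-stable if and only if $\sigma(\gamma) = \gamma(v)\inverse v$ for all $\gamma \in \Gamma$, so the task reduces to showing that the cocycle $\sigma$ is a coboundary. I would produce such a $v$ by inducting along a $\Gamma$-stable filtration $R_u(H) = U_0 \supset U_1 \supset \cdots \supset U_n = 1$ by closed connected normal subgroups with vector-group subquotients—starting from the descending central series (automatically $\Gamma$-stable, being built from characteristic subgroups) and refining it if necessary. Given a partial solution modulo $U_i$, lifting it to a solution modulo $U_{i+1}$ amounts to a $1$-cocycle problem for $\Gamma$ acting rationally on the vector group $U_i/U_{i+1}$; the required vanishing $H^1(\Gamma, U_i/U_{i+1}) = 0$ is a direct consequence of linear reductivity.

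The main technical obstacle I anticipate is arranging the $\Gamma$-equivariant refinement of the central series so that every subquotient is a \emph{vector} group, since in positive characteristic a commutative smooth connected unipotent group need not be one. This is where linear reductivity (rather than mere reductivity) is essential: in characteristic $p$ a linearly reductive group is an extension of a torus by a finite group of order prime to $p$, and these are precisely the conditions under which the necessary equivariant splittings can be extracted. In characteristic zero the argument is immediate, as every commutative unipotent group is already a vector group.
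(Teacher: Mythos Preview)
The paper does not actually prove this proposition: it is quoted from \cite[Prop.~2.3]{FR}, which in turn extends Richardson's \cite[Prop.~6.1]{rich0}. So there is no ``paper's own proof'' to compare against beyond these citations.

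Your cocycle-plus-filtration argument is essentially Richardson's original proof, and it is correct. The identification of the set of Levi subgroups with an $R_u(H)$-torsor, the translation of $\Gamma$-stability into a coboundary condition, and the d\'evissage along a central series are exactly the standard ingredients. One stylistic difference: Richardson phrases the endgame as a fixed-point statement (a linearly reductive group acting on the affine variety of Levi subgroups has a fixed point), which is equivalent to your $H^1$ vanishing but perhaps slightly more geometric.

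Your final paragraph correctly flags the one genuine subtlety. In characteristic $p$ the descending central series need not have vector-group subquotients, and an arbitrary $\Gamma$-action on $\mathbb{G}_a^n$ by group automorphisms need not be linear (Frobenius twists appear), so the step ``$H^1(\Gamma,U_i/U_{i+1})=0$ by linear reductivity'' is not quite as immediate as in characteristic zero. The fix you gesture at is the right one: by Nagata's theorem a linearly reductive $\Gamma$ in characteristic $p$ is an extension of a finite group of order prime to $p$ by a diagonalizable group, and for such $\Gamma$ one can either (a) refine the filtration $\Gamma$-equivariantly to get genuine linear $\Gamma$-modules as subquotients, or (b) argue directly that $H^1(\Gamma,U)$ vanishes for any smooth connected unipotent $U$ with $\Gamma$-action, handling the torus and the prime-to-$p$ finite part separately. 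Either route closes the gap; you have identified it accurately and pointed at the correct resolution.
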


\subsection{$G$-Complete Reducibility}
A subgroup $H$ of $G$ is said to be \emph{$G$-completely reducible}
($G$-cr) if whenever $H$ is contained in a parabolic subgroup $P$ of $G$,
there exists a Levi subgroup $L$ of $P$ containing $H$.
This notion was introduced by Serre as a way of generalizing the notion
of complete reducibility from representation theory, \cite{serre1}, \cite{serre2}.
By \cite[Property 4]{serre1}, if $H$ is a $G$-completely reducible subgroup of $G$, then
$R_u(H) = \{1\}$, i.e., $H$ is reductive.
Another basic result that has bearing on this paper is that a linearly reductive subgroup
of $G$ is always $G$-completely reducible.
This follows from Proposition \ref{prop:levistable}, see also \cite[Lem.\ 2.6]{BMR}.
The following result also plays an important part in what follows,
it is \cite[Prop.\ 3.12, Cor.\ 3.17]{BMR}.

\begin{prop}
Let $H$ be a $G$-completely reducible subgroup of $G$.
Then $C_G(H)$ is $G$-completely reducible.
In particular, $C_G(H)$ is reductive, and $C_G(H)^0$ is connected reductive.
\end{prop}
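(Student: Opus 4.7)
Since $G$-complete reducibility implies $R_u=\{1\}$ (noted just above, following \cite[Property~4]{serre1}) and $C_G(H)^0$ is connected by construction, the \emph{in particular} statements follow at once from the main assertion. So the task is to show $C_G(H)$ is $G$-cr: given a parabolic $P$ of $G$ with $C_G(H)\subseteq P$, produce a Levi subgroup of $P$ containing $C_G(H)$. Write $P=P_\lambda$ for some cocharacter $\lambda$.

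My strategy breaks into two stages. \emph{First}, I would establish that $H\subseteq P$. The key observations are that $H$ normalises $C_G(H)$ and hence centralises the unipotent radical $U:=R_u(C_G(H)^0)$. If $U$ is nontrivial, the Borel--Tits theorem applied to $U$ furnishes a canonical parabolic $P_U$ with $U\subseteq R_u(P_U)$ and $N_G(U), C_G(U)\subseteq P_U$, so $C_G(H)\subseteq N_G(U)\subseteq P_U$ and $H\subseteq C_G(U)\subseteq P_U$; a canonicity/comparison argument exploiting the $G$-cr of $H$ is then intended to yield $H\subseteq P$. (If $U=\{1\}$, so that $C_G(H)^0$ is already reductive, the first ``in particular'' claim is immediate, and the $G$-cr argument can be approached by a variant, e.g.\ analysing the $H$-action on the closed subvariety of $G/P$ consisting of conjugates of $P$ that still contain $C_G(H)$.) \emph{Second}, once $H\subseteq P$ is in hand, the $G$-cr of $H$ supplies a Levi $L_\mu$ of $P$ containing $H$, with associated cocharacter $\mu$ satisfying $P_\mu=P$ and $\mu(k^\times)\subseteq C_G(H)$. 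The retraction $c_\mu:P\to L_\mu$, $g\mapsto\lim_{t\to 0}\mu(t)g\mu(t)\inverse$, restricts to a homomorphism $C_G(H)\to L_\mu\cap C_G(H)$ with kernel $C_G(H)\cap R_u(P)$, and triviality of this kernel (possibly after conjugating $L_\mu$ by an element of $C_{R_u(P)}(H)$) is equivalent to $C_G(H)\subseteq L_\mu$, as required.

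The main obstacle is controlling the kernel $C_G(H)\cap R_u(P)$, and, earlier, forcing the comparison $H\subseteq P$. In both places, the crux is to exploit the precise structural consequences of $G$-complete reducibility of $H$: if $C_G(H)\cap R_u(P)$ contains a nontrivial connected unipotent subgroup $W$, then $W$ is centralised by $H$, Borel--Tits yields a parabolic $Q\supseteq N_G(W)$ with $W\subseteq R_u(Q)$, and the $G$-cr of $H$ supplies a Levi $L'$ of $Q$ containing $H$. A careful analysis of the $H$-action on $R_u(Q)$ relative to $L'$ is then needed either to deliver a contradiction or to extract from it the desired conjugating element in $C_{R_u(P)}(H)$. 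This delicate analysis, parallel to the techniques of \cite{BMR}, is where the technical heart of the argument lies and is where I expect the main difficulty.
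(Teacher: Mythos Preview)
The paper does not prove this proposition at all: it is simply quoted from \cite[Prop.\ 3.12, Cor.\ 3.17]{BMR}, so there is no in-paper proof to compare against. The argument in \cite{BMR} is of a completely different flavour from yours: it rests on the geometric characterisation that a subgroup is $G$-cr if and only if the $G$-orbit (under simultaneous conjugation) of a topological generating tuple is closed in $G^n$, and then invokes closed-orbit results of Richardson type. No direct parabolic/Levi chase of the kind you outline is attempted there.

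Turning to your proposal itself, Stage 1 is not merely delicate but false as stated. The assertion ``$C_G(H)\subseteq P$ implies $H\subseteq P$'' fails already for $G=\mathrm{PGL}_2$ and $H=N_G(T)$: here $H$ is $G$-irreducible (hence $G$-cr), $C_G(H)=\{1\}$ lies in every Borel, yet $H$ lies in no proper parabolic. Your fallback for the case $U=R_u(C_G(H)^0)=\{1\}$ is only a gesture (``analyse the $H$-action on conjugates of $P$ containing $C_G(H)$''), and the same example shows that this set can be all of $G/P$ with $H$ acting with no fixed point that helps. In the case $U\neq\{1\}$, Borel--Tits hands you $H\subseteq P_U$, but $P_U$ is the canonical parabolic attached to $U$ and bears no a priori relation to the given $P$; the promised ``canonicity/comparison argument'' is the entire content and is missing.

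Stage 2 is on firmer ground once $H\subseteq P$ is known: choosing a Levi $L_\mu\supseteq H$ forces $\mu\in Y(C_G(H))$, and then one genuinely does have to show $C_G(H)\cap R_u(P)=\{1\}$ (or conjugate it away). But since Stage 1 cannot be salvaged in the form you propose, the whole scheme collapses before you reach this point. If you want to pursue a non-geometric proof, you would need to replace Stage 1 entirely; one route is to work instead with the subgroup $H\cdot C_G(H)$ and show it is $G$-cr, from which the result for $C_G(H)$ follows.
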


In \cite{BMR}, it is shown that the notion of $G$-complete reducibility
has a geometric interpretation,
obtained by considering the diagonal action of $G$ on $G^n$ for various $n \in \NN$.
Our results in Section \ref{sec:general} below
show that the theory of $G$-complete reducibility has applications
when one considers actions of $G$ on other affine $G$-schemes.

\section{The Theory of Kempf--Rousseau--Hesselink}\label{sec:kempf}

In this section we recall some of the main definitions and results
from the paper of Kempf \cite{kempf} about optimal cocharacters for
actions of algebraic groups on affine varieties,
see also Rousseau \cite{rou} and Hesselink \cite{hess}.
We take our notation and terminology mainly from \cite{kempf}, and
the reader should refer there for more detail; we
recall only the parts of the exposition which are important for this paper.
It is important to note, however, that some of the motivation for the results in this
paper came from reading \cite{hess}, so this is also a key reference.

\subsection{Cocharacters and Length Functions}
Let $\mathbb{G}_m$ denote the multiplicative group, whose $k$-points are isomorphic to the
multiplicative group $k^*$ of $k$.
A \emph{cocharacter} of $G$ is a homomorphism of algebraic groups $\lambda:\mathbb{G}_m \to G$.
We let $Y(G)$ denote the set of cocharacters of $G$.
For every maximal torus $T$ of $G$, the subset $Y(T)$ is a free abelian group
of rank equal to the dimension of $T$;
$Y(G)$ is the union of these groups as $T$ ranges over the maximal tori of $G$.
For any $n \in \ZZ$ and $\lambda \in Y(G)$, we define $n\lambda \in Y(G)$ by
$(n\lambda)(t) = \lambda(t^n) = \lambda(t)^n$.
A cocharacter $\lambda\in Y(G)$ is called \emph{indivisible}
if there is no $\mu\in Y(G)$ with $\lambda = n\mu$ for some $n \in \NN$.
There is a left action of $G$ on $Y(G)$; for $g \in G$, $\lambda \in Y(G)$, let
$(g\cdot\lambda)(t) = g\lambda(t)g\inverse$.

We need the concept of a length function $\normdot$ on $Y(G)$
defined on \cite[p305]{kempf}.

\begin{defn}\label{defn:length}
A \emph{length function} on $Y(G)$ is a
function
\[
\normdot:Y(G) \to \RR
\]
such that:
\begin{itemize}
\item[(a)] $\norm{g\cdot\lambda} = \norm{\lambda}$ for all $g \in G$, $\lambda \in Y(G)$;
\item[(b)] for any maximal torus $T$ of $G$, there is a positive definite integer-valued bilinear
form $(\ ,\ )$ on $Y(T)$ such that $\norm{\lambda}^2 = (\lambda,\lambda)$ for all $\lambda \in Y(T)$.
\end{itemize}
\end{defn}

To show length functions exist, one can start with any maximal torus of $G$ and any
positive definite integer-valued bilinear form on $Y(T)$.
Averaging this form over the Weyl group $W \simeq N_G(T)/T$, we obtain a $W$-invariant bilinear
form $(\ ,\ )$, say.
Now for each $\lambda \in Y(G)$, there exists $g \in G$ such that $g\cdot\lambda \in Y(T)$;
we define $\normdot$ by
$\norm{\lambda}^2 = (g\cdot\lambda,g\cdot\lambda)$.
This is well-defined because $(\ ,\ )$ is $W$-invariant.

\subsection{Cocharacters and $G$-Actions}
Let $X$ be an affine $G$-scheme.
For each $\lambda \in Y(G)$ and each $x \in X$, there is a morphism
$\phi_{\lambda,x}:\Aff^1\setminus\{0\} \to X$ given by $\phi_{\lambda,x}(t) = \lambda(t)\cdot x$.
If $\phi_{\lambda,x}$ extends to a morphism from all of $\Aff^1$ to $X$, then we say that
$\limitx$ \emph{exists}.

Now fix $x \in X$ and let $S$ be a closed
$G$-subscheme of $X$ with $x \notin S$.
Let $|X,x|$ denote the set of $\lambda \in Y(G)$ for which $\limitx$ exists.
Then for each $\lambda \in |X,x|$, we have a morphism $M(\lambda):\Aff^1 \to X$
given by the unique extension of $\phi_{\lambda,x}$;
i.e.  $M(\lambda)(t) = \phi_{\lambda,x}(t)$ for all $t \in \Aff^1\setminus\{0\}$,
and $M(\lambda)(0) = \limitx$.
Following \cite[p308]{kempf}, we define $\alpha_{S,x}(\lambda)$ to be the degree of the divisor
$M(\lambda)\inverse(S)$ on $\Aff^1$; this is a non-negative integer,
which is positive if and only if $M(\lambda)(0) \in S$ (\cite[Lem.\ 3.1]{kempf}, see also \cite[Sec.\ 2]{hess}).

\subsection{Cocharacters and Parabolic Subgroups}
To each $\lambda \in Y(G)$, we can associate a parabolic subgroup $P_\lambda$ of $G$,
which consists of the points of $G$ for which $\limitg$ exists.
The unipotent radical of $P_\lambda$ consists of the points for which
$\limitg = 1$, and the centralizer of the image of $\lambda$ in $G$ is a
Levi subgroup of $P_\lambda$, denoted $L_\lambda$.

If $H$ is a reductive subgroup of $G$, and $\lambda \in Y(H)$, we can
associate a parabolic subgroup of $G$ and of $H$ to $\lambda$.
We reserve the notation $P_\lambda$ for parabolic subgroups of $G$,
and let $P_\lambda(H)$ denote the corresponding subgroup of $H$.
Note that $P_\lambda(H) = P_\lambda \cap H$
and $R_u(P_\lambda(H)) = R_u(P_\lambda) \cap H$.

\subsection{Optimality}
The following is \cite[Thm\ 3.4]{kempf}.

\begin{thm}\label{thm:kempf}
Let $X$ be an affine $G$-scheme, and let $x \in X$ be a point
such that $G\cdot x$ is not closed.
Let $S$ be a closed $G$-subscheme of $X$ such that $x \notin S$ and $S$ meets the closure of
$G\cdot x$.
Then, for a fixed choice of length function $\normdot$ on $Y(G)$:
\begin{itemize}
\item[(a)] $\alpha_{S,x}(\lambda)/\norm{\lambda}$ has a maximum value on the
set of non-trivial cocharacters in $|X,x|$;
\item[(b)] let $\Omega_G(x,S)$ denote the set of \emph{indivisible} cocharacters which achieve the
maximum value from (a). Then:
\begin{itemize}
\item[(i)] $\Omega_G(x,S)$ is non-empty;
\item[(ii)] there is a parabolic subgroup $P(x,S)$ of $G$ such that $P_\lambda = P(x,S)$ for
all $\lambda \in \Omega_G(x,S)$;
\item[(iii)] $R_u(P(x,S))$ acts simply transitively on $\Omega_G(x,S)$;
\item[(iv)] any maximal torus of $P(x,S)$ contains a unique member of $\Omega_G(x,S)$.
\end{itemize}
\end{itemize}
\end{thm}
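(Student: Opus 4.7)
The plan is to pass from the scheme-theoretic setting to a linear one, exploit the combinatorics of weights with respect to a single maximal torus $T$, and then assemble the global picture via $G$-conjugacy. First I would choose a $G$-equivariant closed embedding of $X$ into a finite-dimensional rational $G$-module $V$, which reduces the limits $\limitx$ and the degrees $\alpha_{S,x}(\lambda)$ to weight calculations. Fixing a maximal torus $T$ and decomposing $x=\sum_\chi x_\chi$, the set $Y(T)\cap|X,x|$ is cut out of $Y(T)$ by the inequalities $\langle\chi,\lambda\rangle\geq 0$ over weights $\chi$ appearing in $x$, so it is a rational polyhedral cone; moreover $\alpha_{S,x}$ is a minimum of finitely many linear forms on this cone coming from generators of the defining ideal of $S$. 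Consequently $\alpha_{S,x}/\normdot$ is a continuous, ray-constant, positively homogeneous function on the cone in $Y(T)\otimes\RR$, which descends to a continuous function on a compact subset of the unit sphere.

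Claim (a) then follows by compactness, and the piecewise-linearity/rationality of $\alpha_{S,x}$ ensures the extremum is realized at an honest cocharacter; taking its indivisible multiple produces an element of $\Omega_G(x,S)$, giving (b)(i). To pass from a single $T$ to $Y(G)$ one uses that every $\lambda\in Y(G)$ is $G$-conjugate to some $\mu\in Y(T)$, together with $G$-stability of $S$, to transfer the computation; a uniform boundedness/finiteness argument on the $T$-weights arising across $G\cdot x$ shows the supremum is the same on every maximal torus and is attained. For (b)(ii)--(iv), given two maximizers $\lambda,\lambda'$, find a common maximal torus $T'\subseteq P_\lambda\cap P_{\lambda'}$ (standard for any two parabolics) and conjugate both into $Y(T')$ via unipotent elements; in $Y(T')\otimes\RR$, the set of maximizers of $\alpha_{S,x}/\normdot$ is a face of the cone $|X,x|$, and strict convexity of $\normdot$ (positive-definiteness of the bilinear form in Definition~\ref{defn:length}) forces this face to be a single ray. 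This yields (ii) and (iv) simultaneously, since each maximal torus of $P(x,S)$ then hosts exactly one such ray. Finally for (iii), $R_u(P(x,S))$ preserves $\Omega_G(x,S)$ because unipotent conjugation changes neither the limit point nor the degree $\alpha_{S,x}$ and fixes the parabolic; transitivity follows from (iv) together with $R_u$-conjugacy of maximal tori in $P(x,S)$, while simple transitivity uses that the $R_u(P_\lambda)$-stabilizer of an indivisible $\lambda$ meets the Levi $L_\lambda$ trivially and is therefore itself trivial.

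The main obstacle, in my view, is joining the local $T$-analysis to the global statement: one must simultaneously establish that the supremum of $\alpha_{S,x}/\normdot$ over \emph{all} of $|X,x|$ is attained and that the maximizers share a common parabolic. The natural path is to first pin down the picture for a single $T$, where boundedness and rationality are transparent, and then invoke $G$-conjugacy of maximal tori to pass to $Y(G)$, with the common-parabolic assertion falling out of strict convexity of $\normdot$. Once (a) and (ii) are secured, (iii) and (iv) reduce to standard structural facts about parabolic subgroups and their unipotent radicals.
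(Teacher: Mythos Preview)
The paper does not give its own proof of this theorem: it is introduced with ``The following is \cite[Thm.\ 3.4]{kempf}'' and is simply quoted from Kempf's paper as background, with no argument supplied. So there is no proof in the paper to compare your proposal against.

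That said, your sketch follows the broad contours of Kempf's original argument in \cite{kempf}: linearize via an equivariant embedding, analyse the function $\alpha_{S,x}$ on $Y(T)\otimes\RR$ for a fixed maximal torus, use concavity of $\alpha_{S,x}$ together with strict convexity of the norm to get a unique optimal ray in each apartment, and then globalize via conjugacy of maximal tori. Two places where your outline is looser than what Kempf actually does: first, $\alpha_{S,x}$ is not literally a minimum of linear forms for general $S$; Kempf works instead with the abstract properties of being concave, positively homogeneous, and upper semicontinuous on the building (his ``numerical function'' formalism), and these are what drive the uniqueness. Second, the assertion that $R_u(P_\lambda)$-conjugation preserves $\alpha_{S,x}(\lambda)$ is not automatic from the description you give and is one of the substantive lemmas in \cite{kempf}; it is essentially the statement that $\alpha_{S,x}$ descends to a function on the spherical building. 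If you want to write out a self-contained proof, these are the two steps that need genuine work beyond what you have sketched.
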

We call $\Omega_G(x,S)$ the \emph{optimal class} of cocharacters for $x \in X$
(with respect to $S$);
in what follows, we only consider optimal classes which are non-empty,
as in Theorem \ref{thm:kempf}.
We refer to the parabolic $P(x,S)$ as the
\emph{optimal parabolic subgroup}.
The optimal nature of $\Omega_G(x,S)$ allows the following corollary, \cite[Cor.\ 3.5]{kempf}.

\begin{cor}\label{cor:kempf}
Let the notation be as in Theorem \ref{thm:kempf}.
Then:
\begin{itemize}
\item[(a)] for any $k$-point $g \in G$, $gP(x,S)g\inverse = P(g\cdot x,S)$;
\item[(b)] $C_G(x) \subseteq P(x,S)$.
\end{itemize}
\end{cor}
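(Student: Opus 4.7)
The plan is to exploit the $G$-equivariance built into every ingredient of the optimality setup in Theorem \ref{thm:kempf}: the length function, the function $\alpha_{S,x}$, the set $|X,x|$, and the assignment $\lambda \mapsto P_\lambda$. Once we verify that conjugation by $g$ interacts correctly with all of these, part (a) will be essentially a bookkeeping exercise, and part (b) will follow from (a) combined with the fact that parabolic subgroups of a connected reductive group are self-normalizing.

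For part (a), I would fix $g \in G$ and show that the map $\lambda \mapsto g\cdot\lambda$ restricts to a bijection $\Omega_G(x,S) \to \Omega_G(g\cdot x, S)$. The key identity is $(g\cdot\lambda)(t)\cdot(g\cdot x) = g\cdot(\lambda(t)\cdot x)$, which shows $\lambda \in |X,x|$ if and only if $g\cdot\lambda \in |X,g\cdot x|$, and that $M(g\cdot\lambda) = g\circ M(\lambda)$ where $g$ denotes the automorphism of $X$ given by the $G$-action. Since $S$ is $G$-stable, $M(g\cdot\lambda)^{-1}(S) = M(\lambda)^{-1}(g^{-1}\cdot S) = M(\lambda)^{-1}(S)$ as divisors on $\Aff^1$, hence $\alpha_{S,x}(\lambda) = \alpha_{S,g\cdot x}(g\cdot\lambda)$. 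Combined with condition (a) in Definition \ref{defn:length}, namely $\|g\cdot\lambda\| = \|\lambda\|$, this shows that $\lambda$ and $g\cdot\lambda$ achieve the same value of $\alpha/\|\cdot\|$, and that indivisibility is preserved because $g\cdot(n\mu) = n(g\cdot\mu)$. So we get the claimed bijection, and the maximum values for $x$ and $g\cdot x$ coincide. Finally, applying this to each $\lambda \in \Omega_G(x,S)$ and using $P_{g\cdot\lambda} = gP_\lambda g^{-1}$ gives $P(g\cdot x, S) = gP(x,S)g^{-1}$.

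For part (b), if $g \in C_G(x)$ is a $k$-point, then $g\cdot x = x$, so part (a) immediately yields $gP(x,S)g^{-1} = P(x,S)$, i.e., $g \in N_G(P(x,S))$. Since $P(x,S)$ is a parabolic subgroup of the connected reductive group $G$, it is self-normalizing, and hence $g \in P(x,S)$.

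The main (very mild) obstacle is to justify that the divisor $M(\lambda)^{-1}(S)$ genuinely transforms the way one wants under the $G$-action; this comes down to the fact that $S$ is a closed $G$-subscheme of $X$, so that the morphism $g \colon X \to X$ sends $S$ to $S$ scheme-theoretically, and therefore $(g \circ M(\lambda))^{-1}(S) = M(\lambda)^{-1}(g^{-1}(S)) = M(\lambda)^{-1}(S)$ as subschemes of $\Aff^1$. Everything else is a direct consequence of the definitions and the properties of the length function recorded in Definition \ref{defn:length}.
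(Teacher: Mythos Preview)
Your proof is correct and follows exactly the standard argument: the paper does not give its own proof of this corollary but simply cites \cite[Cor.~3.5]{kempf}, and your argument is essentially Kempf's original one --- use $G$-equivariance of $|X,x|$, $\alpha_{S,x}$, $\normdot$, and $\lambda\mapsto P_\lambda$ to get (a), then deduce (b) from (a) via self-normalization of parabolic subgroups.
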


\begin{rem}\label{rem:independent}
It is clear that the optimal class depends in general on the choice of
length function $\normdot$ on $Y(G)$, as well as the particular subscheme $S$.
However, Hesselink has shown that for certain $X$, $S$ and $x$, the optimal class is
at least independent of the length function \cite[Thm.\ 7.2]{hess}.
In particular, this happens for the special case we consider in Section \ref{sec:nilpotent},
where $X = \lieg$, $S = \{0\}$ and $x$ is a nilpotent element of $\lieg$, cf. \cite[Ex.\ 7.1(b)]{hess}.

We note here that the set-up in Hesselink's paper is slightly different to Kempf's.
Firstly, he discusses a notion of \emph{uniform instability}, where
instead of a single point $x \in X$, one considers a whole family of points
which are all moved together to the same distinguished point of $X$ (so $S$
is a single point, but $x$ is replaced with a subset of $X$).
Secondly, he uses a norm on $Y(G)$ which is slightly different
from Kempf's length function, and derives an optimal class of so-called
\emph{virtual cocharacters}.
However, his results are easily translated into our setting.
\end{rem}

\section{Transferring Optimality to Subgroups}\label{sec:general}

In this section, we want to consider the following general idea:
For a $G$-scheme $X$ and a subgroup $H$ of $G$,
we get an obvious action of $H$ on $X$ inherited from $G$.
Motivated by \cite[(4.4)]{hess}, we would like to ask whether or not
\begin{equation}\label{eqn:intersection}
\Omega_H(x,S) = \Omega_G(x,S) \cap Y(H),
\end{equation}
for various $x \in X$, $S \subseteq X$.
In order to make sense of this, we have to be slightly careful about the set-up;
in particular, we need to know that $\Omega_H(x,S)$ is defined.

Let $X$ be an affine $G$-scheme, and $H$ a reductive subgroup of $G$.
Suppose $x\in X$ is a point whose $G$- and $H$-orbits are not closed,
and let $S$ be a closed $G$-subscheme of $X$ which
does not contain $x$, but meets the closure of the $H$-orbit of $x$.
These conditions ensure that $S$ is an $H$-subscheme, and $S$ also meets the closure of the
$G$-orbit of $x$.
Since $Y(H) \subseteq Y(G)$, it is clear that $|X,x|_H:= |X,x| \cap Y(H) \subseteq |X,x|$.
It is also clear that for $\lambda \in Y(H)$, the value of $\alpha(S,\lambda)$ does not
depend on whether we consider $\lambda$ as a cocharacter of $H$ acting on the $H$-scheme $X$,
or as a cocharacter of $G$ acting on the $G$-scheme $X$.
The final easy observation that allows us to proceed is the following:

\begin{lem}\label{lem:length}
Let $\normdot$ be a length function on $Y(G)$.
Then the restriction of $\normdot$ to $Y(H)$ is a length function on $Y(H)$.
\end{lem}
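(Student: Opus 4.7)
The plan is to verify the two defining conditions of Definition \ref{defn:length} for the restriction $\normdot|_{Y(H)}$, using the given conditions for $\normdot$ on $Y(G)$. Condition (a) is essentially immediate: since $H \subseteq G$, if $\norm{g\cdot\lambda} = \norm{\lambda}$ holds for all $g \in G$ and all $\lambda \in Y(G)$, then in particular it holds for all $h \in H$ and all $\lambda \in Y(H)$, so the $H$-invariance required for a length function on $Y(H)$ is inherited from the $G$-invariance.

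The content of the lemma is therefore condition (b). Here I would argue as follows. Fix a maximal torus $S$ of $H$. Since $S$ is a torus of $G$, it is contained in some maximal torus $T$ of $G$, and consequently $Y(S)$ sits as a subgroup of $Y(T)$ (any cocharacter of $G$ with image in $S$ has image in $T$). By hypothesis there is a positive definite integer-valued bilinear form $(\ ,\ )_T$ on $Y(T)$ computing $\norm{\lambda}^2$ for $\lambda \in Y(T)$. Restricting this form to the subgroup $Y(S) \subseteq Y(T)$ gives a bilinear form on $Y(S)$ which is still integer-valued (integer-valuedness is preserved by restriction) and still positive definite (positive definiteness of a form on a free abelian group descends to any subgroup). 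Moreover, for $\lambda \in Y(S)$ we have $\norm{\lambda}^2 = (\lambda,\lambda)_T$ simply because $\lambda$ lies in $Y(T)$, so the restricted form computes the norm on $Y(S)$. This establishes condition (b).

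The only point that might warrant comment is well-definedness: if $S$ is contained in two different maximal tori $T_1, T_2$ of $G$, the restrictions of $(\ ,\ )_{T_1}$ and $(\ ,\ )_{T_2}$ to $Y(S)$ might in principle disagree. However, Definition \ref{defn:length}(b) does not require a canonical choice of form for each torus; it only asserts existence. Any one of the restrictions serves, and they in fact agree by polarization, since both compute the same function $\lambda \mapsto \norm{\lambda}^2$ on $Y(S)$. There is no real obstacle in this argument — the lemma is essentially a functoriality observation, which is precisely why it is recorded here as a preliminary step before tackling the substantive comparison of $\Omega_H(x,S)$ and $\Omega_G(x,S) \cap Y(H)$.
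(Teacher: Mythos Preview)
Your proof is correct and follows essentially the same approach as the paper: verify condition (a) by noting that $G$-invariance implies $H$-invariance, and verify condition (b) by choosing a maximal torus $T$ of $G$ containing the given maximal torus $S$ of $H$ and restricting the form from $Y(T)$ to $Y(S)$. Your additional remark on well-definedness is accurate but, as you observe, not strictly needed for the argument.
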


\begin{proof}
We need to check conditions (a) and (b) of Definition \ref{defn:length}.
Condition (a) is obvious; if $\normdot$ is $G$-invariant, then it is $H$-invariant.
For (b), let $S$ be a maximal torus of $H$, and let $T$ be a maximal torus of $G$ containing $S$.
Then $Y(S)$ is a subgroup of $Y(T)$, and it is clear that a positive definite
integer-valued bilinear form
on $Y(T)$ restricts to a positive definite integer valued bilinear form on $Y(S)$.
\qed
\end{proof}

The preceding discussion shows that  the optimal class
$\Omega_H(x,S)$ exists and can be defined with respect to the same length function
as $\Omega_G(x,S)$, so our question makes sense
independently of the length function chosen on $Y(G)$.
For the rest of this section, we fix a length function $\normdot$ on $Y(G)$, and
use the restriction of this length function to $Y(H)$ for subgroups $H$ of $G$.
Our next result shows that the desired equality (\ref{eqn:intersection})
holds if and only if $\Omega_G(x,S) \cap Y(H)$ is non-empty.

\begin{prop}\label{prop:gotosubgroup}
Let $X$ be an affine $G$-scheme, let $x \in X$ be such that $G\cdot x$ is
not closed, and let $S$ be a closed $G$-subscheme of $X$ which
does not contain $x$, but meets $\overline{G\cdot x}$.
If $H$ is a reductive subgroup of $G$ such that
$\Omega_G(x,S)\cap Y(H)$ is non-empty,
then $H\cdot x$ is not closed, $S$ meets $\overline{H\cdot x}$ and
$\Omega_H(x,S) = \Omega_G(x,S) \cap Y(H)$.
\end{prop}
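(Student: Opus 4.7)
The plan is to fix $\mu \in \Omega_G(x,S) \cap Y(H)$ (which exists by hypothesis) and use it to deduce both the geometric claims about the $H$-orbit and the value of the maximum of $\alpha_{S,x}(\lambda)/\norm{\lambda}$ on $Y(H)$; the equality of optimal classes will then reduce to a short check about indivisibility. Because $S$ meets $\overline{G\cdot x}$ and $x \notin S$, Theorem \ref{thm:kempf} applied to $G$ tells us that the common maximum of $\alpha_{S,x}(\lambda)/\norm{\lambda}$ is strictly positive, so in particular $\alpha_{S,x}(\mu) > 0$. Hence $y := M(\mu)(0) = \lim_{t\to 0}\mu(t)\cdot x$ lies in $S$, and since $\mu \in Y(H)$ the morphism $t \mapsto \mu(t)\cdot x$ takes values in $H\cdot x$, giving $y \in S \cap \overline{H\cdot x}$. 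Since $S$ is $G$-stable (hence $H$-stable) and $x \notin S$, the orbit $H\cdot x$ is disjoint from $S$, so $y \in \overline{H\cdot x} \setminus H\cdot x$; this shows simultaneously that $S$ meets $\overline{H\cdot x}$ and that $H\cdot x$ is not closed. Via Lemma \ref{lem:length} and Theorem \ref{thm:kempf} applied to the $H$-action, the optimal class $\Omega_H(x,S)$ is then itself well-defined.

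Next I would compare the maxima $M_G$ and $M_H$ of $\alpha_{S,x}(\lambda)/\norm{\lambda}$ taken over the non-trivial cocharacters in $|X,x|$ and $|X,x|_H$ respectively. The remarks immediately preceding Lemma \ref{lem:length}, together with that lemma itself, show that both $\alpha_{S,x}(\lambda)$ and $\norm{\lambda}$ are intrinsic to $\lambda$ and do not depend on whether we regard it as a cocharacter of $H$ or of $G$. Combined with $|X,x|_H \subseteq |X,x|$ this gives $M_H \leq M_G$, and since $\mu$ attains $M_G$ while lying in $Y(H)$, equality $M_H = M_G$ follows.

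Finally, I would compare indivisibilities. For $\lambda \in Y(H)$ I claim that $\lambda$ is indivisible in $Y(H)$ if and only if it is indivisible in $Y(G)$; the non-trivial direction is that if $\lambda = n\mu'$ with $\mu' \in Y(G)$ and $n \in \NN$, then, since $k$ is algebraically closed, every element of $k^*$ is an $n$-th power, whence $\mu'(k^*) = \lambda(k^*) \subseteq H$, and the closedness of $H$ in $G$ forces $\mu'$ to factor through $H$, so $\mu' \in Y(H)$. Putting the three ingredients together, a cocharacter $\lambda \in Y(H)$ lies in $\Omega_H(x,S)$ precisely when it is indivisible in $Y(G)$ and attains $M_G$, i.e.\ precisely when $\lambda \in \Omega_G(x,S) \cap Y(H)$. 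The only genuine obstacle along the way is this last indivisibility equivalence; the algebraic closure of $k$ is what makes the $n$-th power step go through cleanly, while every other assertion is essentially an unwinding of definitions.
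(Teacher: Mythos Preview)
Your proof is correct, and for the geometric claims and the equality $M_H = M_G$ it matches the paper's argument essentially line for line. The divergence is in how the reverse inclusion $\Omega_H(x,S) \subseteq \Omega_G(x,S) \cap Y(H)$ is obtained. You prove directly that for $\lambda \in Y(H)$, indivisibility in $Y(H)$ and in $Y(G)$ coincide (using that $k$ is algebraically closed so the $n$-th power map on $k^*$ is surjective), which together with $M_H = M_G$ gives the full set equality in one stroke. The paper instead only uses the trivial direction (indivisible in $G$ implies indivisible in $H$) to get the forward inclusion, and then for the reverse inclusion appeals to Theorem~\ref{thm:kempf}(b)(iii): since the fixed $\lambda$ lies in $\Omega_H(x,S)$ and $R_u(P_\lambda(H)) = R_u(P(x,S))\cap H$ acts transitively on $\Omega_H(x,S)$, every element of $\Omega_H(x,S)$ is an $R_u(P(x,S))$-conjugate of $\lambda$, hence lies in $\Omega_G(x,S)$. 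Your route is more self-contained, avoiding the transitivity statement from Kempf's theorem; the paper's route is shorter once that statement is in hand and sidesteps the indivisibility check altogether.
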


\begin{proof}
Let $\lambda \in \Omega_G(x,S)\cap Y(H)$.
Then since $\limitx \in S$ and $\lambda \in Y(H)$,
we have $\limitx \in S\cap \overline{H\cdot x}$.
Since $\limitx$ is not in $G\cdot x$, it is not in $H\cdot x$.
Thus $H\cdot x$ is not closed, and $S$ meets the closure of $H\cdot x$.
In particular, it makes sense to define $\Omega_H(x,S)$ for $x$, $S$ and our fixed length.

Now $\lambda$ is an indivisible
cocharacter of $G$, so $\lambda$ is an indivisible cocharacter of $H$.
Since we use the same length function on $Y(G)$ and $Y(H)$,
and $\alpha(S,\lambda)$ is independent of $G$ and $H$, it is clear
that the maximal value of $\alpha(S,\lambda)/\norm{\lambda}$ as
$\lambda$ runs over $|X,x|_H$ is less than or equal to the maximum value as
$\lambda$ runs over $|X,x|$.
But since our choice of $\lambda$ attains this maximal value, there is equality here,
and since $\lambda$ is an indivisible cocharacter of $H$, this
means $\lambda \in \Omega_H(x,S)$.
Thus $\Omega_G(x,S) \cap Y(H) \subseteq \Omega_H(x,S)$.

Finally, consider $P_\lambda(H) = P(x,S) \cap H$;
this is the optimal parabolic subgroup of $H$ given by $\Omega_H(x,S)$.
Moreover, $R_u(P_\lambda(H)) = R_u(P(x,S)) \cap H$ acts transitively on $\Omega_H(x,S)$.
Since $R_u(P(x,S))$ acts on $\Omega_G(x,S)$,
we see that $\Omega_H(x,S) \subseteq \Omega_G(x,S) \cap Y(H)$, and we are done.
\qed
\end{proof}

Motivated by the previous result, we can now define our notion of optimality for subgroups.
\begin{defn}\label{defn:optimalsubgroup}
Let $H$ be a reductive subgroup of $G$, and let $X$ be an affine $G$-scheme.
Suppose $x\in X$ is a point whose $G$- and $H$-orbits are not closed,
and let $S$ be a closed $G$-subscheme of $X$ which
does not contain $x$, but meets the closure of the $H$-orbit of $x$.
Then, following \cite[(4.4)]{hess}, we call $H$ \emph{optimal for $X$, $x$ and $S$}
if $\Omega_H(x,S) = \Omega_G(x,S) \cap Y(H)$.
\end{defn}

Our next result gives a wide class of optimal subgroups for a general action
of $G$ on an affine variety $X$.

\begin{thm}\label{thm:Gvscent}
Suppose $X$ is an affine $G$-scheme and that $K$ is a $G$-completely reducible subgroup of $G$.
Let $x \in X^K$ be such that $G\cdot x$ is not closed, and let
$S$ be any closed $G$-subscheme of $X$ which meets $\overline{G\cdot x}$ and does not contain $x$.
Set $H = C_G(K)^0$.
Then:
\begin{itemize}
\item[(i)] $H\cdot x$ is not closed;
\item[(ii)] $S$ meets $\overline{H\cdot x}$, and hence $\Omega_H(x,S)$ is defined;
\item[(iii)] $H$ is optimal for $X$, $x$ and $S$.
\end{itemize}
\end{thm}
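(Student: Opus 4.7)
The plan is to reduce everything to showing that $\Omega_G(x,S) \cap Y(H)$ is non-empty, since once we have that, Proposition \ref{prop:gotosubgroup} immediately gives (i), (ii), and (iii) simultaneously. So the entire task is to produce a single optimal cocharacter of $G$ for the pair $(x,S)$ whose image lies in $H = C_G(K)^0$.

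To locate such a cocharacter, first I would use the hypothesis that $K$ fixes $x$ to conclude $K \subseteq C_G(x)$, and then invoke Corollary \ref{cor:kempf}(b) to deduce that $K \subseteq P(x,S)$. Since $K$ is $G$-completely reducible, the definition of $G$-complete reducibility yields a Levi subgroup $L$ of $P(x,S)$ with $K \subseteq L$. Now I want to identify $L$ as $L_\mu$ for some $\mu \in \Omega_G(x,S)$: pick a maximal torus $T$ of $L$ (hence of $P(x,S)$), and let $\mu \in \Omega_G(x,S)$ be the unique optimal cocharacter with $\mu \in Y(T)$, which exists by Theorem \ref{thm:kempf}(b)(iv). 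Then $L_\mu = C_G(\mu(\mathbb{G}_m))$ is a Levi of $P_\mu = P(x,S)$ containing $T$; since the Levi subgroups of $P(x,S)$ containing a given maximal torus $T$ of $P(x,S)$ are uniquely determined, $L_\mu = L$.

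With $K \subseteq L = L_\mu = C_G(\mu(\mathbb{G}_m))$, the elements of $K$ commute with $\mu(\mathbb{G}_m)$, so $\mu(\mathbb{G}_m) \subseteq C_G(K)$. Because $\mu(\mathbb{G}_m)$ is connected, it lies in $C_G(K)^0 = H$, so $\mu \in Y(H)$. This gives $\mu \in \Omega_G(x,S) \cap Y(H)$, and the proof is complete by appeal to Proposition \ref{prop:gotosubgroup}.

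The only place where there is anything subtle is the step in which $K \subseteq L$ is upgraded to the existence of $\mu \in \Omega_G(x,S)$ with $K \subseteq L_\mu$; this is precisely where $G$-complete reducibility of $K$ and the uniqueness clause of Theorem \ref{thm:kempf}(b)(iv) combine. Everything else is a formal bookkeeping of what has already been established in Sections \ref{sec:prelims} and \ref{sec:kempf}. I expect no further obstacles, and in particular no direct estimates of $\alpha_{S,x}$ or $\|\cdot\|$ will be needed, because Proposition \ref{prop:gotosubgroup} has already packaged those.
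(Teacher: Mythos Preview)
Your proof is correct and follows essentially the same route as the paper: reduce to exhibiting one $\mu \in \Omega_G(x,S)\cap Y(H)$ via Proposition~\ref{prop:gotosubgroup}, use $K\subseteq C_G(x)\subseteq P(x,S)$ and $G$-complete reducibility to place $K$ inside a Levi $L$ of $P(x,S)$, then identify $L=L_\mu$ for some optimal $\mu$. The only difference is in that last identification: the paper picks any $\lambda\in\Omega_G(x,S)$, conjugates $L_\lambda$ to $L$ by some $u\in R_u(P(x,S))$, and invokes Theorem~\ref{thm:kempf}(b)(iii) to see $\mu:=u\cdot\lambda\in\Omega_G(x,S)$; you instead pick a maximal torus $T$ of $L$, take the unique $\mu\in\Omega_G(x,S)\cap Y(T)$ from Theorem~\ref{thm:kempf}(b)(iv), and use uniqueness of the Levi of $P(x,S)$ containing $T$ to get $L_\mu=L$. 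These are two equivalent ways of reading off the same consequence of Kempf's theorem, so the arguments are interchangeable.
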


\begin{proof}
By Proposition \ref{prop:gotosubgroup},
it is enough to show that $\Omega_G(x,S) \cap Y(H)$ is non-empty.
Let $P(x,S)$ be the optimal parabolic subgroup of $G$ from Theorem \ref{thm:kempf}.
Then, by Corollary \ref{cor:kempf}, $C_G(x) \subseteq P(x,S)$.
Since $K \subseteq C_G(x)$, we also have $K \subseteq P(x,S)$.
Now $K$ is $G$-cr, so there exists a Levi subgroup $L$ of $P(x,S)$ containing $K$.
Let $\lambda \in \Omega_G(x,S)$; so $P(x,S) = P_\lambda$,
and $L_\lambda$ is also Levi subgroup of $P(x,S)$.
Then $L$ is $R_u(P(x,S))$-conjugate to $L_\lambda$,
hence $L = L_\mu$ for some $R_u(P(x,S))$-conjugate $\mu$ of $\lambda$.
But $R_u(P(x,S))$ acts on $\Omega_G(x,S)$,
hence $\mu \in \Omega_G(x,S)$.
Finally, we have $K \subseteq L_\mu$, so $K$ centralizes the image of $\mu$, and
$\mu \in Y(C_G(K)^0) = Y(H)$, as required.
\qed
\end{proof}

\begin{rems}\label{rems:endofgeneral}
(i). Note that in Theorem \ref{thm:Gvscent} it is necessary to know that (i) and (ii) hold
to even ask whether $H$ is optimal, as the definition of optimality for subgroups only
applies to those $H$ for which (i) and (ii) hold.

(ii). In characteristic $0$,  a subgroup of $G$ is
$G$-completely reducible if and only if it is reductive \cite[Sec.~2.2]{BMR}.
In this case, part (i) of Theorem \ref{thm:Gvscent} can be deduced from
\cite[$\S$3~Cor.~3.1]{luna}, due to Luna,
and parts (ii) and (iii) are similar to Kempf's result \cite[Cor.~4.5]{kempf}.
In fact, for (i) in characteristic $0$,
the stronger statement that the $G$-orbit of $x \in X^K$ is
closed if and only if the $H$-orbit is closed follows from Luna's result.

(iii). For any $x \in X$, there is a unique closed orbit $C$ in the
closure of $G\cdot x$.
Clearly, $C \subseteq S$ for any closed $G$-subscheme $S$
which meets $\overline{G\cdot x}$.
In applications, it usually suffices to just consider the case $S=C$
when applying Kempf's results.

(iv). We can also interpret our results within the framework of
the paper \cite{hess} of Hesselink (cf. Remark \ref{rem:independent}).
In particular, Theorem \ref{thm:Gvscent} is a generalization of \cite[Prop.\ 9.4]{hess},
which gives the result when $K$ is a torus, so that $H$ is a Levi subgroup
of some parabolic subgroup of $G$ (a \emph{critical subgroup} in \cite{hess}).
\end{rems}

We finish this section with an example which shows that the reverse direction of Theorem
\ref{thm:Gvscent} (and hence also Proposition \ref{prop:gotosubgroup}) does not
work in general.

\begin{exmp}\label{exmp:gcrnotcentcr}
In \cite[Ex.\ 5.1, 5.4]{BMR2}, there are examples of commuting
subgroups $A$ and $B$ of a reductive group $G$ such that
$A$ and $B$ are $G$-cr, but $B$ is not $C_G(A)^0$-cr.
We can find an $n$-tuple $\mathbf{x} = (x_1,\ldots,x_n) \in G^n$
for some $n$ such that $B$ is the closure
of the subgroup generated by $x_1, \ldots, x_n$.

In this situation, if we set $H = C_G(A)^0$,
we know that
$G\cdot \mathbf{x}$ is closed in $G^n$, whereas
$H \cdot \mathbf{x}$ is not, by \cite[Cor.\ 3.7]{BMR}.
Thus any closed $G$-subscheme $S$ which meets $G\cdot\mathbf{x}$ actually contains all
of $G\cdot\mathbf{x}$, and hence all of $\overline{H\cdot\mathbf{x}}$,
so we cannot define $\Omega_G(\mathbf{x},S)$ and $\Omega_H(\mathbf{x},S)$
for such subschemes.
However, the unique closed $H$-orbit $C$ in $\overline{H\cdot\mathbf{x}}$ is
an $H$-subscheme which meets $\overline{H\cdot\mathbf{x}}$ and does not contain $\mathbf{x}$,
so we \emph{can} talk about $\Omega_H(\mathbf{x},C)$.
This example also shows why we have to be very careful
in our set-up when trying to compare $G$- and $H$-actions,
just to make sure that what we are saying even makes sense.
\end{exmp}

\section{Cocharacters Associated with Nilpotent Elements}\label{sec:nilpotent}

In this section, we consider the adjoint action of $G$ on its Lie algebra $\lieg$,
and apply our results to the notion of a cocharacter associated with a nilpotent element of $\lieg$.
Throughout this section, $e$ denotes a nilpotent element in $\lieg$, and
our distinguished closed $G$-subscheme $S$ of $\lieg$ is simply the set $S = \{0\}$,
which is the unique closed $G$-orbit in $\overline{G\cdot e}$.
We keep our fixed length function, and drop $S$ from our notation;
thus $\Omega_G(e)$ is the set of optimal cocharacters for $e$, and $P(e)$
is the corresponding parabolic subgroup of $G$.
If $H$ is a reductive subgroup of $G$, and $e \in \lieh$, then
$\{0\}$ is also the unique closed $H$-orbit in $\overline{H\cdot e}$;
thus $\Omega_H(e)$ is always defined in this situation.

Let $\lambda \in Y(G)$; then $\lambda$ gives a decomposition of $\lieg$
\[
\lieg = \oplus_{i \in \ZZ} \lieg(i,\lambda),
\]
where $\lieg(i,\lambda) := \{x \in \lieg \mid \Ad(\lambda(t)) x = t^i x \textrm{ for all } t \in k^*\}$.

\begin{defn}\label{defn:distinguished}
Let $H$ be a closed subgroup of $G$, with Lie algebra $\lieh \subseteq \lieg$ and
suppose $e \in \lieh \subseteq \lieg$ is a nilpotent element.
Then $e$ is said to be \emph{distinguished} in $\lieh$ if each torus in $C_H(e)$ is contained
in the centre of $H$.
\end{defn}

\begin{defn}\label{defn:associated}
Let $G$ be a connected reductive linear algebraic group, and let $e \in \lieg$ be nilpotent.
A cocharacter $\lambda \in Y(G)$ is called \emph{associated} with $e$ if
\begin{itemize}
\item[(i)] $e \in \lieg(2,\lambda)$;
\item[(ii)] there exists a Levi subgroup $L$ of some parabolic subgroup of $G$ such that
\begin{itemize}
\item[(a)] $e$ is distinguished in $\Lie(L)$;
\item[(b)] $\im(\lambda) \subseteq \DD L$.
\end{itemize}
\end{itemize}
We let $\Omega_G^a(e)$ denote the set of cocharacters of $G$ associated with $e$.
\end{defn}

The following lemma gives a necessary condition for a Levi subgroup $L$
to arise in part (ii) above, see \cite[Rem.\ 4.7]{jantzen}.

\begin{lem}\label{lem:distinL}
Let $e$ be a nilpotent element of $\lieg$, and suppose $L$ is a Levi subgroup of some parabolic subgroup of $G$.
Then $e$ is distinguished in $\Lie(L)$ if and only if $L = C_G(S)$,
where $S$ is a maximal torus of $C_G(e)$.
\end{lem}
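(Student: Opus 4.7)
The plan is to prove both directions directly, leveraging two standard facts about Levi subgroups of parabolics in a connected reductive group $G$: first, every such Levi $L$ is the centralizer of its connected centre, i.e.\ $L = C_G(Z(L)^0)$; and second, if $T \subseteq \lieg$ is a subtorus, then $\Lie(C_G(T)) = \lieg^T$, the fixed points of the adjoint action of $T$. Write $T_L := Z(L)^0$, so $L = C_G(T_L)$. Since $e \in \Lie(L)$ and $T_L \subseteq Z(L)$ acts trivially on $\Lie(L)$, we have $T_L \subseteq C_G(e)$; this containment will be used repeatedly.

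For the ($\Leftarrow$) direction, assume $L = C_G(S)$ with $S$ a maximal torus of $C_G(e)$. Every element of $L$ commutes with $S$ by definition, so $S \subseteq Z(L)$. Now let $T$ be any torus contained in $C_L(e) = L \cap C_G(e)$. Because $T \subseteq L = C_G(S)$, $T$ commutes with $S$, so the product $TS$ is a connected abelian subgroup of $C_G(e)$ consisting of semisimple elements and is therefore a torus. Since $TS \supseteq S$ and $S$ is maximal among tori of $C_G(e)$, we conclude $TS = S$, whence $T \subseteq S \subseteq Z(L)$. Thus $e$ is distinguished in $\Lie(L)$.

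For the ($\Rightarrow$) direction, assume $e$ is distinguished in $\Lie(L)$. Since $T_L$ is a torus in $C_G(e)$, it is contained in a maximal torus $S$ of $C_G(e)$ (extend $T_L$ inside $C_G(e)^0$). Because $S$ commutes with $T_L$, we have $S \subseteq C_G(T_L) = L$, and combining with $S \subseteq C_G(e)$ gives $S \subseteq C_L(e)$. The distinguished hypothesis then forces $S \subseteq Z(L)$, and connectedness of $S$ upgrades this to $S \subseteq Z(L)^0 = T_L$. Combined with $T_L \subseteq S$, we get $S = T_L$, and so $L = C_G(T_L) = C_G(S)$ with $S$ a maximal torus of $C_G(e)$.

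There is no substantial obstacle here; the only place one has to be a little careful is in each direction passing between $Z(L)$ and its identity component $Z(L)^0 = T_L$, and in checking that the product of two commuting tori lying in $C_G(e)$ is again a torus in $C_G(e)$ (so that the maximality of $S$ can be invoked). Both are routine, making the argument essentially an exercise in unwinding the definition of ``distinguished'' against the characterization of Levi subgroups of parabolics.
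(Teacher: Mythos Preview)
The paper does not actually prove this lemma; it is stated with a pointer to \cite[Rem.\ 4.7]{jantzen} and left at that. So there is no in-paper argument to compare against. Your proof is correct and is precisely the standard argument one expects here: both directions reduce to the characterization $L = C_G(Z(L)^0)$ of a Levi subgroup, combined with the elementary observation that two commuting tori inside $C_G(e)$ generate a torus in $C_G(e)$, so maximality can be invoked. The passages between $Z(L)$ and $Z(L)^0$ that you flag are indeed the only places requiring care, and you handle them correctly (the key point being that $S$, as a torus, is connected). One small remark: in the $(\Leftarrow)$ direction you implicitly use that $e \in \Lie(L)$; this follows since $S \subseteq C_G(e)$ forces $e \in \lieg^S = \Lie(C_G(S)) = \Lie(L)$, and it would do no harm to say so explicitly.
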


Premet \cite{premet} and McNinch \cite{mcninch} have shown that if the
characteristic of $k$ is good for $G$, then there exist cocharacters associated with
any nilpotent $e \in \lieg$.
In fact, in good characteristic, we can say a lot about the set $\Omega_G^a(e)$.
The following is an amalgamation of several results, see \cite[Lem.\ 5.3]{jantzen},
\cite[Thm.\ 2.3, Prop.\ 2.5]{premet},  \cite[Prop.\ 2.11, Prop.\ 2.15, Cor.\ 2.16, Rem.\ 2.17]{FR}.

\begin{prop}\label{prop:keyprop}
Suppose $p$ is good for $G$. Let $e \in \lieg$ be nilpotent.
\begin{itemize}
\item[(i)] $\Omega_G^a(e)$ is a non-empty subset of $\Omega_G(e)$.
\item[(ii)] Let $\lambda \in \Omega_G^a(e)$.
Set $R_e = R_u(P_\lambda) \cap C_G(e)$ and $C_G(e,\lambda) = L_\lambda \cap C_G(e)$.
Then $R_e = R_u(C_G(e))$ and $C_G(e) = R_e \rtimes C_G(e,\lambda)$ is a Levi decomposition
of $C_G(e)$.
\item[(iii)] Any two cocharacters of $G$ associated with $e$ are conjugate by an element of $C_G(e)^0$.
Conversely, $C_G(e)$ acts on $\Omega_G^a(e)$, and $R_e$ acts simply transitively on $\Omega_G^a(e)$.
\item[(iv)] The map $\lambda \to C_G(e,\lambda)$ is a bijection between
$\Omega_G^a(e)$ and the set of Levi subgroups of $C_G(e)$, and this map
is compatible with the action of $R_e$ on both these sets.
\end{itemize}
\end{prop}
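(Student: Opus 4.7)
The plan is to handle the four parts in order, drawing on the Hesselink--Kempf optimization framework of Section \ref{sec:kempf} together with the Premet/McNinch existence theorem used as a black box.

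For (i), the non-emptiness of $\Omega_G^a(e)$ I would simply quote from \cite{premet}, \cite{mcninch}. To show $\Omega_G^a(e) \subseteq \Omega_G(e)$, take $\lambda \in \Omega_G^a(e)$ with associated Levi $L$ as in Definition \ref{defn:associated}. The condition $e \in \lieg(2,\lambda)$ yields $\lim_{t\to 0}\lambda(t)\cdot e = 0$, so $\lambda \in |\lieg,e|$ and $\alpha_{\{0\},e}(\lambda) \geq 2$. To upgrade this to the bound on $\alpha_{\{0\},e}(\lambda)/\norm{\lambda}$ that certifies optimality, I would appeal to Hesselink's characterization \cite[Thm.\ 7.2, Ex.\ 7.1(b)]{hess}. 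By Lemma \ref{lem:distinL}, $L = C_G(S)$ for $S$ a maximal torus of $C_G(e)$, and the distinguished condition on $e$ in $\Lie(L)$ together with $\im(\lambda) \subseteq \DD L$ constrains $\lambda$ as tightly as possible, in particular forcing indivisibility (scaling down would destroy the weight-$2$ property of $e$).

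For (ii), by (i) we have $\lambda \in \Omega_G(e)$, so Corollary \ref{cor:kempf}(b) gives $C_G(e) \subseteq P(e) = P_\lambda$. Restricting the Levi projection $\pi : P_\lambda \to L_\lambda$ to $C_G(e)$ yields a short exact sequence
\[
1 \to C_G(e) \cap R_u(P_\lambda) \to C_G(e) \to \pi(C_G(e)) \to 1,
\]
with image contained in $C_{L_\lambda}(e) = C_G(e,\lambda)$; and since $C_G(e,\lambda)$ is itself a subgroup of $C_G(e)$ on which $\pi$ restricts to the identity, this sequence splits with image all of $C_G(e,\lambda)$, giving $C_G(e) = R_e \rtimes C_G(e,\lambda)$. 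To finish I would verify reductivity of $C_G(e,\lambda)$ and the identification $R_e = R_u(C_G(e))$. Both follow by passing to the distinguished Levi $L$ from Definition \ref{defn:associated}: since $e$ is distinguished in $\Lie(L)$, the maximal torus of $C_L(e)$ is $Z(L)^0$, and combined with $\im(\lambda) \subseteq \DD L$ one lifts the Levi decomposition of $C_L(e)$ to give the global statements in $G$.

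For (iii), by (ii) any two associated cocharacters $\lambda, \mu$ give Levi decompositions of $C_G(e)$, hence their Levi complements $C_G(e,\lambda), C_G(e,\mu)$ are $R_e$-conjugate by standard conjugacy of Levi subgroups; the conjugating element also conjugates $\lambda$ to $\mu$, giving simple transitivity of $R_e$ on $\Omega_G^a(e)$ (the stabilizer lies in $R_u(P_\lambda) \cap L_\lambda = \{1\}$). For (iv), the map $\lambda \mapsto C_G(e,\lambda)$ is $R_e$-equivariant by construction; surjectivity follows from (ii) and conjugacy of Levi subgroups of $C_G(e)$, while injectivity is the simple transitivity from (iii). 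The main obstacle is part (ii), specifically the reductivity of $C_G(e,\lambda)$ and the equality $R_e = R_u(C_G(e))$: this is precisely where the good-characteristic hypothesis is crucial, since in bad characteristic centralizers of nilpotents can fail smoothness and the distinguished Levi picture breaks down.
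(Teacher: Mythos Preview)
The paper does not give a proof of Proposition~\ref{prop:keyprop}: it is stated as ``an amalgamation of several results'' and simply cited from \cite[Lem.\ 5.3]{jantzen}, \cite[Thm.\ 2.3, Prop.\ 2.5]{premet}, and \cite[Prop.\ 2.11, Prop.\ 2.15, Cor.\ 2.16, Rem.\ 2.17]{FR}. There is therefore no in-paper argument to compare your sketch against; the intended ``proof'' is to quote those references.

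Judged on its own merits, your outline follows the shape of the arguments in those sources but has real gaps. In (i), your indivisibility argument proves the wrong thing: observing that $e\in\lieg(2,\lambda)$ fails for $\mu$ when $\lambda=n\mu$ only shows $\mu$ is not \emph{associated}; it does not show that no such $\mu$ exists in $Y(G)$. Indivisibility of associated cocharacters is genuinely delicate (think about adjoint groups, where coroots can be divisible in the cocharacter lattice), and in the cited sources it comes out of the explicit construction via weighted Dynkin diagrams rather than from a formal remark. In (ii), the sentence ``one lifts the Levi decomposition of $C_L(e)$ to give the global statements in $G$'' is where all the content lies: the reductivity of $C_G(e,\lambda)$ and the identification $R_e=R_u(C_G(e))$ are substantial theorems of Premet using the good-characteristic hypothesis, not formal consequences of the distinguished case. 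Finally, your (iii) and (iv) are circular as written: you infer that the $R_e$-element conjugating $C_G(e,\lambda)$ to $C_G(e,\mu)$ also sends $\lambda$ to $\mu$, but that is precisely the injectivity of $\lambda\mapsto C_G(e,\lambda)$, which is part of (iv). The route in the literature is to first verify directly from Definition~\ref{defn:associated} that $C_G(e)$ acts on $\Omega_G^a(e)$, then combine $\Omega_G^a(e)\subseteq\Omega_G(e)$ with the simple transitivity of $R_u(P(e))$ on $\Omega_G(e)$ from Theorem~\ref{thm:kempf}(b)(iii) to obtain the $R_e$-statements, and derive (iv) last.
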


Note that since $\Omega_G^a(e) \subseteq \Omega_G(e)$,
the parabolic subgroups $P_\lambda$ in part (ii)
are all equal to the optimal parabolic $P(e)$ given by Kempf's Theorem \ref{thm:kempf}.
Building on the previous section,
if $H$ is a reductive subgroup of $G$ and $e \in \lieh$ is nilpotent,
it is interesting to know when
\begin{equation}\label{eqn:nilpintersection}
\Omega^a_H(e) = \Omega^a_G(e) \cap Y(H),
\end{equation}
see \cite[$\S$5.12]{jantzen}, \cite{FR}.
Again, we have to be slightly careful to ensure that our question makes sense,
which results in some characteristic restrictions in our results.
The first result shows that Eqn.~(\ref{eqn:nilpintersection})
holds if and only if Eqn.~(\ref{eqn:intersection}) holds.

\begin{thm}\label{thm:equality}
Let $H$ be a reductive subgroup of $G$ and $e \in \lieh$ a nilpotent element.
If $p$ is good for $H$ and $G$,
then $\Omega_H(e) = \Omega_G(e) \cap Y(H)$
if and only if $\Omega_H^a(e) = \Omega_G^a(e) \cap Y(H)$.
\end{thm}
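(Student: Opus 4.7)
The plan is to reduce the equivalence to a clean characterization of associated cocharacters inside the optimal class: in good characteristic,
\[
\Omega_G^a(e) = \{\lambda \in \Omega_G(e) : e \in \lieg(2,\lambda)\}.
\]
The inclusion $\subseteq$ is immediate from Proposition \ref{prop:keyprop}(i) combined with Definition \ref{defn:associated}(i). For the reverse inclusion, fix $\lambda_0 \in \Omega_G^a(e)$. By Theorem \ref{thm:kempf}(b)(iii) and Proposition \ref{prop:keyprop}(iii), $R_u(P(e))$ acts simply transitively on $\Omega_G(e)$ and $R_e = R_u(P(e)) \cap C_G(e)$ acts simply transitively on $\Omega_G^a(e)$. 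Given $\lambda \in \Omega_G(e)$ with $e \in \lieg(2,\lambda)$, write $\lambda = u\lambda_0 u\inverse$ for a unique $u \in R_u(P(e))$. A direct computation shows that $\Ad(\lambda(t))\, e = t^2 e$ is equivalent to $\Ad(u\inverse)\, e \in \lieg(2,\lambda_0)$. Since $u\inverse$ lies in the unipotent radical of $P_{\lambda_0}$, whose Lie algebra is $\oplus_{i>0}\lieg(i,\lambda_0)$, and $e \in \lieg(2,\lambda_0)$, one has $\Ad(u\inverse)\, e \in e + \lieg(>2,\lambda_0)$; the condition $\Ad(u\inverse)\, e \in \lieg(2,\lambda_0)$ then forces the higher-weight part to vanish, so $u \in C_G(e)$ and hence $u \in R_e$, giving $\lambda \in \Omega_G^a(e)$.

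With this characterization available in both $G$ and $H$ (good characteristic for both), the forward direction of the theorem is almost formal. For $\lambda \in Y(H)$, since $e \in \lieh$, the condition $e \in \lieg(2,\lambda)$ is the same as $e \in \lieh(2,\lambda)$, so
\[
\Omega_H^a(e) = \{\lambda \in \Omega_H(e) : e \in \lieg(2,\lambda)\} = \{\lambda \in \Omega_G(e) \cap Y(H) : e \in \lieg(2,\lambda)\} = \Omega_G^a(e) \cap Y(H),
\]
where the middle equality uses the hypothesis $\Omega_H(e) = \Omega_G(e) \cap Y(H)$.

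For the reverse direction, the characterization lemma is not even needed. In good characteristic for $H$, Proposition \ref{prop:keyprop}(i) applied to $H$ gives $\Omega_H^a(e) \ne \emptyset$. By assumption this equals $\Omega_G^a(e) \cap Y(H)$, which is therefore non-empty; since $\Omega_G^a(e) \subseteq \Omega_G(e)$, also $\Omega_G(e) \cap Y(H) \ne \emptyset$, and Proposition \ref{prop:gotosubgroup} immediately delivers $\Omega_H(e) = \Omega_G(e) \cap Y(H)$.

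The principal obstacle is the weight computation in the characterization lemma, specifically verifying that conjugation by any $u$ in the unipotent radical of $P_{\lambda_0}$ strictly raises $\lambda_0$-weights on $\lieg(2,\lambda_0)$, so that $\Ad(u\inverse)\, e = e$ is forced. This rests only on the standard root-space description of $R_u(P_{\lambda_0})$, but it is the step where the content of Proposition \ref{prop:keyprop} is genuinely used, and it is what makes the two equalities in the theorem really the same condition rather than a simple logical consequence of Proposition \ref{prop:gotosubgroup}.
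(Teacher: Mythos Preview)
Your proof is correct and takes a genuinely different route from the paper's.

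Both proofs handle the reverse implication identically via Proposition \ref{prop:gotosubgroup}. For the forward implication, both must ultimately show that any $\lambda \in \Omega_G(e)$ with $e \in \lieg(2,\lambda)$ lies in $\Omega_G^a(e)$, but the arguments diverge. You establish this as a clean characterization lemma by a direct weight computation: writing $\lambda = u\lambda_0 u^{-1}$ with $u \in R_u(P(e))$ and $\lambda_0 \in \Omega_G^a(e)$, the standard fact that $R_u(P_{\lambda_0})$ acts trivially on the associated graded of the $\lambda_0$-filtration forces $u \in C_G(e)$, hence $u \in R_e$. The paper instead uses a fixed-point argument: since $\im(\lambda)$ normalizes $C_G(e)$, Proposition \ref{prop:levistable} produces a $\im(\lambda)$-stable Levi subgroup $C_G(e,\mu)$ with $\mu \in \Omega_G^a(e)$; one checks $\im(\lambda)$ permutes $\Omega_G^a(e)$ and fixes $\mu$ via the bijection in Proposition \ref{prop:keyprop}(iv), so $\im(\lambda) \subseteq L_\mu$, and then Theorem \ref{thm:kempf}(b)(iv) forces $\lambda = \mu$.

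Your approach is more self-contained: the characterization lemma applied in both $G$ and $H$ delivers the full set equality $\Omega_H^a(e) = \Omega_G^a(e) \cap Y(H)$ directly, whereas the paper proves only that a single $\lambda \in \Omega_H^a(e)$ lands in $\Omega_G^a(e)$ and then invokes the external result \cite[Thm.\ 3.17]{FR} to upgrade this to the full equality. The paper's route, on the other hand, nicely exhibits how the abstract machinery (Proposition \ref{prop:levistable}, uniqueness in a maximal torus) does the work without any explicit weight-space calculation.
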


\begin{proof}
First suppose that $\Omega_H(e) = \Omega_G(e) \cap Y(H)$.
Since $\Char k$ is good for $H$, we have $\Omega_H^a(e)$ is non-empty and is a subset
of $\Omega_H(e)$.
Let $\lambda \in \Omega_H^a(e)$.
Then $\lambda \in \Omega_G(e)$.
By \cite[Thm.\ 3.17]{FR}, if we can show that $\lambda \in \Omega_G^a(e)$,
then we can conclude that $\Omega_H^a(e) = \Omega_G^a(e) \cap Y(H)$,
as required.

Since $e \in \lieh(2,\lambda) \subseteq \lieg(2,\lambda)$, it is clear that
$\im(\lambda)$ normalizes $C_G(e)$.
Since, by our standing assumption, $p$ is good for $G$,
$C_G(e)$ has a Levi decomposition,
and $R_e$ acts simply transitively on the set of Levi subgroups of $C_G(e)$,
by Proposition \ref{prop:keyprop}.
Moreover, the Levi subgroups of $C_G(e)$ are in bijection with the elements of $\Omega_G^a(e)$.
By Proposition \ref{prop:levistable}, there is a Levi subgroup $C_G(e,\mu)$ of $C_G(e)$ which is stable under
$\im(\lambda)$, where $\mu \in \Omega_G^a(e)$.

Since $\mu$ is associated with $e$, $e \in \lieg(2,\mu)$.
Also, by Lemma \ref{lem:distinL},
there exists a maximal torus $S$ of $C_G(e)$ such that $e$ is
distinguished nilpotent in $\Lie(L)$, where $L = C_G(S)$, and $\im(\mu) \subseteq \DD L$.
Now for any $x = \lambda(t) \in \im(\lambda)$,
since $\im(\lambda)$ normalizes $C_G(e)$, $xSx\inverse$ is a maximal torus of $C_G(e)$;
thus $e$ is distinguished nilpotent in $\Lie(xLx\inverse)$.
Moreover, since $e \in \lieg(2,\lambda)$ also, we have $e \in \lieg(2,x\cdot\mu)$.
Finally, note that $\mu \in Y(\DD L)$ implies $x\cdot\mu \in Y(\DD(xLx\inverse))$.
These arguments suffice to show that for all $x \in \im(\lambda)$,
$x\cdot\mu \in \Omega_G^a(e)$.

Now we have $xC_G(e,\mu)x\inverse = C_G(e, x\cdot\mu) = C_G(e,\mu)$ for all $x \in \im(\lambda)$,
so $\im(\lambda)$ must fix $\mu$.
This means that $\im(\lambda) \subseteq C_G(\im(\mu)) = L_\mu$.
Thus there is a maximal torus $T$ of $L_\mu$ containing $\im(\lambda)$
and $\im(\mu)$; but $\lambda,\mu \in \Omega_G(e)$, so $\lambda = \mu$
by Theorem \ref{thm:kempf}(b)(iv), and we are done.

For the reverse implication, suppose $\Omega_H^a(e) = \Omega_G^a(e) \cap Y(H)$.
Then, since the characteristic is good for $G$ and $H$, this set is non-empty.
Let $\lambda \in \Omega_H^a(e)$.
Then $\lambda \in \Omega_G^a(e) \subseteq \Omega_G(e)$, so
$\lambda \in \Omega_G(e) \cap Y(H)$ and Proposition \ref{prop:gotosubgroup} applies.
\qed
\end{proof}

\begin{rem}
In characteristic $0$, for any reductive subgroup $H$ of $G$
and any nilpotent element $e \in \lieh$, we always have $\Omega_H^a(e) = \Omega_G^a(e) \cap Y(H)$,
see \cite[5.12]{jantzen}.
Thus, by Theorem \ref{thm:equality}, we also always have $\Omega_H(e) = \Omega_G(e) \cap Y(H)$ here.
We are grateful to G. R\"ohrle for pointing out this application.
\end{rem}

The following corollary is immediate from Theorem \ref{thm:Gvscent}
and Theorem \ref{thm:equality}.

\begin{cor}\label{cor:centcase}
Suppose $K$ is a $G$-completely reducible subgroup of $G$, set $H=C_G(K)^0$,
and suppose that $p$ is good for $H$ and $G$.
Then $\Omega_H^a(e) = \Omega_G^a(e) \cap Y(H)$ for all nilpotent elements $e \in \lieh$.
\end{cor}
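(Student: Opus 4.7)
The plan is to combine Theorem \ref{thm:Gvscent} applied to the adjoint action of $G$ on $\lieg$ with the equivalence provided by Theorem \ref{thm:equality}. Almost no new work is needed; the task reduces to checking that the hypotheses of Theorem \ref{thm:Gvscent} are in force for our chosen data, namely $X = \lieg$ (an affine $G$-scheme via $\Ad$), the point $x = e$, and the closed $G$-subscheme $S = \{0\}$.

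First I would verify that $e \in \lieg^K$. Since $H = C_G(K)^0$ commutes with $K$ by definition, $K$ acts trivially on $H$ by conjugation, and therefore acts trivially on $\lieh = \Lie(H)$ via the adjoint representation. As $e$ is assumed to lie in $\lieh$, it follows that $e \in \lieg^K$. We may assume $e \neq 0$, since otherwise the statement is vacuous (there is no optimal class attached to the closed orbit $\{0\}$, and both sides of the claimed equality are trivial). For $e \neq 0$, the $G$-orbit $G \cdot e$ is not closed because $0$ lies in $\overline{G\cdot e}$, and $S = \{0\}$ is a closed $G$-subscheme of $\lieg$ that meets $\overline{G \cdot e}$ but does not contain $e$. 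Thus all of the input required for Theorem \ref{thm:Gvscent} is present.

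Applying Theorem \ref{thm:Gvscent} with these choices gives three conclusions: $H \cdot e$ is not closed, $S = \{0\}$ meets $\overline{H \cdot e}$ (so $\Omega_H(e)$ is defined), and $H$ is optimal for $\lieg$, $e$, and $S$, i.e.\ $\Omega_H(e) = \Omega_G(e) \cap Y(H)$. Then, because $p$ is good for both $G$ and $H$, Theorem \ref{thm:equality} converts this equality of Kempf-optimal classes into the corresponding equality $\Omega_H^a(e) = \Omega_G^a(e) \cap Y(H)$ for the associated classes, which is the desired conclusion.

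There is no genuine obstacle here: the heavy lifting is done in the two cited theorems, and the only substantive observation in the corollary is the elementary fact that $\lieh \subseteq \lieg^K$, which lets us recognize $e$ as a fixed point of the $G$-cr subgroup $K$ and thereby engage the hypothesis of Theorem \ref{thm:Gvscent}.
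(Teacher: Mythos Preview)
Your proof is correct and is exactly the argument the paper has in mind: the paper states that the corollary is immediate from Theorem~\ref{thm:Gvscent} and Theorem~\ref{thm:equality}, and you have simply spelled out the verification that the hypotheses of Theorem~\ref{thm:Gvscent} apply to $X=\lieg$, $x=e$, $S=\{0\}$ via the observation $\lieh\subseteq\lieg^K$.
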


\begin{rem}
Corollary \ref{cor:centcase} covers in a uniform way many results
already in the literature.
For example, if $K$ is a torus in $G$, then $C_G(K)^0 = C_G(K)$ is
a Levi subgroup of some parabolic of $G$. This gives \cite[Cor.\ 3.22]{FR}.
More generally, if $s \in G$ is a semisimple element and $K$ is the subgroup
generated by $s$, then $K$ is linearly reductive, hence $G$-cr.
In this case, the groups $C_G(K)^0 = C_G(s)^0$ are the \emph{pseudo-Levi} subgroups of $G$,
which gives us \cite[Cor.\ 3.27]{FR}.
\end{rem}

\begin{cor}\label{cor:doublecent}
Suppose $H$ is a $G$-cr subgroup of $G$ such that $H = C_G(C_G(H)^0)^0$,
and $p$ is good for $H$ and $G$.
Then $\Omega_H^a(e) = \Omega_G^a(e) \cap Y(H)$ for all nilpotent elements $e \in \lieh$.
\end{cor}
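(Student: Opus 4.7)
The plan is to deduce this result directly from Corollary \ref{cor:centcase} by making the choice $K := C_G(H)$. Since $H$ is assumed to be $G$-completely reducible, the Proposition in Section 2.3 immediately gives that $K = C_G(H)$ is itself $G$-cr, so the only thing left to verify is the identity $C_G(K)^0 = H$ required by Corollary \ref{cor:centcase}.

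To check this, I would first note that the standing hypothesis $H = C_G(C_G(H)^0)^0$ forces $H$ to be connected, since it equals an identity component. The inclusion $C_G(H)^0 \subseteq C_G(H)$ reverses under taking centralizers to give $C_G(C_G(H)) \subseteq C_G(C_G(H)^0)$, so passing to identity components yields $C_G(C_G(H))^0 \subseteq C_G(C_G(H)^0)^0 = H$. Conversely, $H$ centralizes $C_G(H)$ by definition, and being connected it sits inside $C_G(C_G(H))^0$. Combining gives $C_G(K)^0 = H$, and the assumption that $p$ is good for $G$ and for $H = C_G(K)^0$ is exactly the goodness hypothesis needed by Corollary \ref{cor:centcase}. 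The conclusion of that corollary then reads as the desired equality $\Omega_H^a(e) = \Omega_G^a(e) \cap Y(H)$ for all nilpotent $e \in \lieh$.

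In particular there is no substantive obstacle: the real content is already packaged inside Corollary \ref{cor:centcase}, and the present statement is just the observation that the double-centralizer condition $H = C_G(C_G(H)^0)^0$ is precisely what is needed to exhibit $H$ in the form $C_G(K)^0$ for a naturally $G$-cr choice of $K$, namely $K = C_G(H)$. The only point that even requires a line of verification is the purely formal inclusion chain showing $C_G(C_G(H))^0 = H$.
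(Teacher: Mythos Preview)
Your proposal is correct and follows essentially the same route as the paper: both deduce the statement from Corollary~\ref{cor:centcase} by exhibiting $H$ as $C_G(K)^0$ for a suitable $G$-cr subgroup $K$. The only difference is cosmetic: the paper takes $K = C_G(H)^0$, so that $C_G(K)^0 = C_G(C_G(H)^0)^0 = H$ is literally the hypothesis, whereas you take $K = C_G(H)$ and then supply the short (and valid) inclusion argument showing $C_G(C_G(H))^0 = H$; either choice works.
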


\begin{proof}
This follows from Corollary \ref{cor:centcase}, setting $K = C_G(H)^0$.
\qed
\end{proof}

We finish this section with an example of how
Corollaries \ref{cor:centcase} and \ref{cor:doublecent}
can be applied.

\begin{exmp}
In \cite{ls} there are extensive tables of the subgroups of simple exceptional
algebraic groups and their centralizers. These tables can be used to generate many cases.
For example, looking at \cite[Table\ 8.1]{ls},
if $G = E_8$ and $p > 7$, then there is a pair of subgroups
$X_1 = G_2$ and $X_2 = F_4$ with $C_G(X_1)^0 = X_2$ and $C_G(X_2)^0 = X_1$.
Both these subgroups are $G$-cr, by \cite[Thm.\ 1]{ls}, so Corollary
\ref{cor:centcase} applies.
This is also an example of Corollary \ref{cor:doublecent},
since $C_G(C_G(X_i)^0)^0 = X_i$ for $i=1,2$.
\end{exmp}

\section{Extension to Non-Connected Groups}\label{sec:nonconnected}

In \cite[Sec.\ 6]{BMR}, and the later paper \cite{BMR2},
results about $G$-complete reducibility are proved in the more general case
where $G$ is reductive, but not necessarily connected;
we refer the reader to \cite[Sec.\ 6]{BMR} for the formalities.
Kempf's results can be immediately translated into this setting also,
so the results in Section 4 are easily extended.

In this section we briefly
indicate how to extend our results from Section \ref{sec:nilpotent}.
Since for any linear algebraic group $G$, $Y(G) = Y(G^0)$ and
$\Lie(G) = \Lie(G^0)$, moving to non-connected $G$ is not difficult.
To show what we mean, we give one possible extension of Corollary \ref{cor:centcase};
the proof comes from obvious extensions of our earlier proofs.

\begin{prop}\label{prop:nonconnected}
Let $G$ be a (possibly non-connected) reductive linear algebraic group,
and let $K$ be a $G$-completely reducible subgroup of $G$.
Set $H:= C_G(K)$ and
suppose that $p$ is good for $G^0$ and $H^0$.
Then $\Omega_H^a(e) = \Omega_{H^0}^a(e) = \Omega_G^a(e) \cap Y(H)$ for all nilpotent $e \in \lieh$.
\end{prop}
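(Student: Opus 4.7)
The plan is to reduce this proposition to the connected case, Corollary \ref{cor:centcase}, by exploiting that cocharacters and Lie algebras cannot detect non-connectedness. Since $Y(G) = Y(G^0)$, $\lieg = \Lie(G^0)$, $Y(H) = Y(H^0)$, and $\lieh = \Lie(H^0)$, an inspection of Definition \ref{defn:associated} shows that $\Omega_G^a(e) = \Omega_{G^0}^a(e)$ and $\Omega_H^a(e) = \Omega_{H^0}^a(e)$: a Levi subgroup $L$ of a parabolic of $G$ satisfies the defining conditions if and only if its identity component $L^0$ (a Levi of a parabolic of $G^0$) does, since $\Lie(L) = \Lie(L^0)$ and $\im(\lambda)$ is connected. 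This already yields the first equality in the proposition and reduces the remaining one to $\Omega_{H^0}^a(e) = \Omega_{G^0}^a(e) \cap Y(H^0)$ inside the connected reductive group $G^0$.

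Since $p$ is good for both $G^0$ and $H^0$, I would then invoke Theorem \ref{thm:equality} to pass from associated classes down to Kempf-optimal classes: it suffices to establish $\Omega_{H^0}(e) = \Omega_{G^0}(e) \cap Y(H^0)$. By Proposition \ref{prop:gotosubgroup}, this is in turn equivalent to exhibiting a single cocharacter in $\Omega_{G^0}(e) \cap Y(H^0)$, so all of the substantive content of the proposition is concentrated in producing one optimal cocharacter of $G^0$ which takes values in $H^0$.

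To produce such a cocharacter, I would follow the argument of Theorem \ref{thm:Gvscent}, using the non-connected extensions of Kempf's theory and of $G$-complete reducibility from \cite[Sec.~6]{BMR} and \cite{BMR2}. The optimal parabolic $P(e)$ of the (possibly non-connected) group $G$ contains $C_G(e)$ by the non-connected analogue of Corollary \ref{cor:kempf}, and hence contains $K$. The $G$-complete reducibility of $K$ then places $K$ in some Levi subgroup $L$ of $P(e)$. Fixing $\lambda \in \Omega_{G^0}(e)$, an element of $R_u(P(e)) = R_u(P(e)^0)$ conjugates $L_\lambda^0$ to $L^0$; transporting $\lambda$ by the same element gives $\mu \in \Omega_{G^0}(e)$ whose image is centralized by $K$. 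Since $\im(\mu)$ is connected, $\im(\mu) \subseteq C_G(K)^0 = H^0$, so $\mu \in Y(H^0)$, as required. The main obstacle is essentially a bookkeeping one: one must verify that the non-connected extensions of Kempf's theorem and of $G$-complete reducibility interact with each other, and with Levi decompositions of non-connected parabolic subgroups, exactly as their connected counterparts do; the author's opening remarks in this section indicate these checks are routine.
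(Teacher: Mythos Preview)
Your proposal is correct and matches the paper's intent: the paper gives no detailed proof here, stating only that ``the proof comes from obvious extensions of our earlier proofs,'' and your outline is precisely such an extension---reduce the associated-cocharacter statement to the connected case via $Y(G)=Y(G^0)$ and $\lieg=\Lie(G^0)$, then rerun the argument of Theorem~\ref{thm:Gvscent} inside the non-connected Kempf/R-parabolic framework of \cite[Sec.~6]{BMR} to produce an optimal cocharacter centralized by $K$. One small remark on your final step: rather than passing to identity components $L_\lambda^0$ and $L^0$ and conjugating those, it is cleaner to use directly that $R_u(P(e))$ acts transitively on the R-Levi subgroups of $P(e)$ in the non-connected setting, so $L = uL_\lambda u^{-1} = L_{u\cdot\lambda}$ for some $u\in R_u(P(e))$, whence $K\subseteq L_{u\cdot\lambda}=C_G(\im(u\cdot\lambda))$; your version via identity components also works, but requires the extra observation that two R-Levis of $P(e)$ with the same identity component must coincide.
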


\begin{rem}
Proposition \ref{prop:nonconnected}
applies in particular in the case where $G = G_1 \rtimes K$,
where $G_1$ is a connected reductive group and $K$ is
a reductive group acting on $G_1$ by automorphisms
such that the image of $K$ in $G$ is $G$-cr.
Note that, by \cite[Cor.\ 14.11]{borel}, $G$ \emph{is} a reductive group
in this situation.
This allows one to consider outer automorphisms of a connected reductive
group, for example graph automorphisms of simple groups in good characteristic,
or automorphisms permuting the simple factors of a reductive group;
we give a simple illustration in Example \ref{exmp:diagonal} below.
\end{rem}

\begin{exmp}\label{exmp:diagonal}
Let $X$ be a connected reductive algebraic group such that $p$ is good for $X$.
Let $G_1 = X \times \ldots \times X$ be the direct product of $r$ copies of $X$,
and let $a$ be an automorphism of $G_1$ which acts as an $r$-cycle permuting the factors.
Set $K = \langle a \rangle$ and $G = G_1 \rtimes K$;
then $C_G(K)^0$ is the diagonal embedding of $X$ in $G_1$.
If $p$ divides $r$, then $K$ is not linearly reductive;
however, the image of $K$ in $G$ \emph{is} always $G$-cr.
(In the language of \cite[Sec.\ 6]{BMR},
any R-parabolic subgroup $P$ of $G$ which contains $K$ must be
of the form $P = (Q \times \ldots \times Q) \rtimes K$,
where $Q$ is a parabolic subgroup of $X$.
Then for any Levi subgroup $M$ of $Q$, $K$ is contained in
the R-Levi subgroup $L = (M\times\ldots\times M)\rtimes K$ of $P$.)
Thus Proposition \ref{prop:nonconnected} says that the diagonal embedding of a cocharacter
of $X$ associated with some $e \in \Lie(X)$ is a cocharacter of $G$ (hence of $G_1$) associated
with the diagonal embedding of $e$ in $\Lie(G) = \Lie(G_1)$.
\end{exmp}

We finish by noting that Corollary \ref{cor:doublecent} holds for non-connected
$G$ and $H$, with the weaker hypothesis that $H^0 = C_G(C_G(H))^0$, assuming $p$ is good for
$G^0$ and $H^0$.


\bigskip {\bf Acknowledgements}:
I am grateful to Gerhard R\"ohrle for introducing me to the problems discussed
in Section \ref{sec:nilpotent}, and for helpful discussions on earlier versions
of the paper.
Ben Martin has also given me valuable insight into the niceties of
Kempf's paper \cite{kempf}.
I would like to thank the referees for their pertinent comments,
which helped me to improve the clarity of the exposition.


\end{document}